\title{Note on the union-closed sets conjecture}
\author{Abigail Raz\thanks{Department of Mathematics, Rutgers University, Piscataway NJ. Email: ajr224@math.rutgers.edu}}
\date{}
\def\A{{\mathcal A}}
\def\F{{\mathcal F}}
\newtheorem{theorem}{Theorem}
\newtheorem{counter}{Counterexample}
\newtheorem{note}{Note}
\newtheorem{fact}{Fact}
\newtheorem{conj}{Conjecture}
\newtheorem{defn}{Definition}
\begin{document}

\maketitle

\begin{abstract}
The union-closed sets conjecture states that if a family of sets $\A \neq \{\emptyset\}$ is union-closed, then there is an element which belongs to at least half the sets in $\A$. In 2001, D. Reimer showed that the average set size of a union-closed family, $\A$, is at least $\frac{1}{2} \log_2 |\A|$. In order to do so, he showed that all union-closed families satisfy a particular condition, which in turn implies the preceding bound. Here, answering a question raised in the context of T. Gowers' polymath project on the union-closed sets conjecture, we show that Reimer's condition alone is not enough to imply that there is an element in at least half the sets.
\end{abstract}

\section{Introduction}
Given the set $[n]= \{1, \ldots, n\}$ and a family $\A \subseteq 2^{[n]}$ we say $\A$ is union-closed if for $A, B \in \A$ we have $A \cup B \in \A$.  The Union-Closed Sets Conjecture, due to P. Frankl \cite{Rival}, states that if $\A \subseteq 2^{[n]}$ is union-closed and $\A \neq \{\emptyset\}$ then there is some element of $[n]$ which belongs to at least half the sets in $\A$.  One method of approaching this conjecture is to look at the average frequency of an element or, equivalently, the average set size. The following theorem of D. Reimer \cite{Reimer} was thus motivated by and can be shown to follow from, the union-closed sets conjecture.

\begin{theorem}\label{Reimer} 
If $\A \subseteq 2^{[n]}$ and is union-closed, then

\begin{equation}\frac{\sum_{A \in \A} |A|}{|\A|} \ge \frac{\log_2|\A|}{2}\end{equation}

\end{theorem}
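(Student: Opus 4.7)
The plan is to treat $\mathbf{A}$ as a uniform random element of $\A$ and aim to show $\mathbb{E}|\mathbf{A}| \ge \tfrac{1}{2} H(\mathbf{A}) = \tfrac{1}{2}\log_2 |\A|$ via an entropy / decision-tree argument that exploits union-closedness.

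Specifically, I would build a prefix-free binary encoding $\sigma:\A\to\{0,1\}^*$ recursively: choose a splitting coordinate $x \in [n]$, record the bit $\mathbf{1}[x\in A]$, and recurse into the sub-family $\{A\in\A:x\in A\}$ on the ``yes'' branch and its complement in $\A$ on the ``no'' branch. The sub-family $\{A\in\A:x\in A\}$ is again union-closed, since $A,B\ni x$ implies $A\cup B\ni x$, so the hypothesis propagates through the recursion. Each $1$-bit in the codeword $\sigma(A)$ corresponds to a distinct element of $A$, so the number of $1$-bits in $\sigma(A)$ is at most $|A|$. By Kraft's inequality, the average codeword length is at least $\log_2|\A|$.

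To close the gap, I would argue that on average at least half the bits of $\sigma(A)$ are $1$'s, yielding $\sum_A |A|\ge\sum_A(\#1\text{-bits in }\sigma(A))\ge \tfrac{1}{2} \sum_A|\sigma(A)|\ge\tfrac{|\A|\log_2|\A|}{2}$. The main obstacle, and the step that genuinely uses union-closedness, is establishing this ``average half-ones'' bound. A naive per-node balance would require $|\{A\in\A:x\in A\}|\ge|\A|/2$ at each step, but this is precisely Frankl's conjecture, which we are not assuming. Following Reimer's strategy, I would instead average over a uniformly random ordering of the coordinates and use a symmetrization argument, so that in expectation the $1$-bits dominate the $0$-bits even if no single branch is balanced. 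Carefully implementing this symmetrization, and checking that it is compatible with the recursive coding so that the $1$-bit-per-element accounting survives the averaging, is the technical heart of the argument.
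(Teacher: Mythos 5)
There is a genuine gap, and it sits exactly where you locate it: the ``average half-ones'' bound. Your bookkeeping around it is fine --- Kraft gives average codeword length at least $\log_2|\A|$, and the distinct splitting coordinates along a root-to-leaf path ensure each $1$-bit names a distinct element of $A$ --- but this only concentrates all of the difficulty into the one claim for which you offer no mechanism. Concretely, at a node of your tree with current sub-family $\mathcal{B}$ and splitting coordinate $x$, the $1$-bits contributed are $|\{B \in \mathcal{B} : x \in B\}|$ out of $|\mathcal{B}|$ total bits, so the aggregate half-ones claim amounts to $\sum_{\text{nodes}} |\mathcal{B}_x| \ge \frac{1}{2}\sum_{\text{nodes}}|\mathcal{B}|$. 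Requiring this per node is literally Frankl's conjecture, as you note; but the proposed fix --- averaging over a uniformly random coordinate --- gives $\mathbb{E}_x|\mathcal{B}_x| = \frac{1}{n}\sum_{B \in \mathcal{B}}|B|$, which is (up to normalization) precisely the average set size you are trying to lower-bound. So the symmetrization is circular, or at best unsubstantiated: union-closedness of the branches is preserved, but nothing in the sketch converts that into a lower bound on the density of $1$-bits.

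This is also not Reimer's strategy, so invoking it does not fill the hole. Reimer's proof, as the paper describes, has two steps: first, every union-closed $\A$ satisfies Condition~1, i.e., there is a filter $\F$ and a bijection $A \mapsto F_A$ with $A \subseteq F_A$ and the cubes $[A, F_A]$ pairwise disjoint (this is the combinatorial heart, obtained by an iterated up-shift/normalization of $\A$, and it has no analogue in your decision tree); second, the disjointness of these subcubes, together with the fact that their tops form an up-set of size $|\A|$, yields the bound by an edge-counting/isoperimetric argument in the hypercube. If you want to salvage the entropy framing, you would still need to import something equivalent to Condition~1 to control the imbalance of the branches; as written, the proposal proves only the weaker and union-closedness-free statement that the average codeword length is at least $\log_2|\A|$.
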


We will say that $\F \subseteq 2^{[n]}$ is a filter if $G \supseteq F$ and $F \in \F$ implies $G \in \F$. Additionally, for $A \subseteq B \subseteq [n]$ define $[A,B]\coloneqq \{C : A \subseteq C \subseteq B\}$.
In order to prove Theorem \ref{Reimer}, Reimer introduced the following criterion for a family $\A \subseteq 2^{[n]}$:\\
\begin{defn}\label{def}
We say $\A \subseteq 2^{[n]}$ satisfies \emph{Condition 1}
if there exists a filter $\mathcal{F} \subseteq 2^{[n]}$ and a bijection $A \mapsto F_A$ from $\A$ to $\F$ satisfying:
\begin{enumerate}
    \item $A \subseteq F_A$ for all $A \in \A$
    \item For distinct $A, B \in \A$ we have $[A, F_A] \cap [B, F_B] = \emptyset$.
\end{enumerate}
\end{defn}

Reimer's proof of Theorem \ref{Reimer} consists of two steps.  He first shows that every union-closed family $\A$ satisfies Condition 1.
He then shows that Condition 1 implies Theorem \ref{Reimer}.

In 2016, T. Gowers began a polymath project focused on the union-closed sets conjecture. In the comments on the initial post I. Balla first proposed the conjecture below. Gowers reiterates this conjecture in his second post focused on strengthenings of the union-closed sets conjecture. In the comments there is a discussion of a possible counterexample, and it is stated that all families with ground set at most 5 and a random sampling of families with ground set at most 12 have been confirmed to satisfy the conjecture \cite{Gowers}. 
\begin{conj}\label{conj}
Assume $\A\subseteq 2^{[n]}$ satisfies Condition 1. Then there is an element $x\in [n]$ in at least half the sets of $\A$.
\end{conj}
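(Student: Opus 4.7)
The plan is to transfer the trivial filter frequency bound to $\A$ via the bijection guaranteed by Condition 1. First I would record the elementary fact that any filter $\F \subseteq 2^{[n]}$ has every element in at least half of its sets: the map $F \mapsto F \cup \{x\}$ injects $\{F \in \F : x \notin F\}$ into $\{F \in \F : x \in F\}$. Writing $p_x = |\{A \in \A : x \in A\}|/|\A|$ and $q_x = |\{F \in \F : x \in F\}|/|\F|$, this gives $q_x \geq 1/2$ for every $x \in [n]$, while the bijection together with $A \subseteq F_A$ yields $p_x \leq q_x$ and the clean expression $q_x - p_x = \Pr_A[x \in F_A \setminus A]$. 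The problem then reduces to exhibiting a single $x$ with $\Pr_A[x \in F_A \setminus A] \leq q_x - 1/2$.

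Next I would try to exploit the disjoint-interval condition to control the deficit globally. Disjointness of the intervals $[A, F_A]$ in $2^{[n]}$ gives $\sum_A 2^{|F_A \setminus A|} \leq 2^n$, and concavity of $\log_2$ converts this to $\mathbb{E}_A |F_A \setminus A| \leq n - \log_2 |\A|$. Summing $q_x - p_x$ over $x$ yields $\sum_x p_x \geq \sum_x q_x - \mathbb{E}_A|F_A \setminus A| \geq \log_2|\A| - n/2$, which is only average information and in general far below the $n/2$ that would force a pointwise majority by averaging. A natural candidate for a privileged $x$ is an element lying in every minimal set of $\F$: such an $x$ has $q_x = 1$, so it would suffice to show $\Pr_A[x \in F_A \setminus A] \leq 1/2$. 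Failing that I would try induction on $n$, noting that $\A_x := \{A \in \A : x \in A\}$ cleanly inherits Condition 1 on $[n] \setminus \{x\}$ relative to the restricted filter $\{F \setminus \{x\} : F \in \F,\ x \in F\}$; one would then have to handle the complementary piece $\A \setminus \A_x$, where the corresponding $F_A$ may still contain $x$ and the natural projected intervals can overlap.

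The main obstacle is that Condition 1 constrains only the interval partition, not how the lifts $F_A \setminus A$ distribute across elements. An adversarial bijection could load the entire lift onto precisely those elements where $q_x$ is close to $1/2$, exactly cancelling the slack provided by the filter inequality, and nothing in Definition \ref{def} prohibits this. Reimer's own argument plateaus at average set size $(\log_2|\A|)/2$, strictly smaller than the $n/2$ that averaging would need to force a majority element; this is circumstantial but strong evidence that a proof, if one exists, must introduce a genuinely new structural observation about the bijection beyond the disjoint-interval condition.
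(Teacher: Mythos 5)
There is a fundamental problem here that is not a gap in your argument but in the task itself: the statement you are trying to prove is false, and the paper's ``proof'' of it is in fact a refutation. The entire point of the paper is to exhibit a family satisfying Condition 1 with no element in half the sets, namely an explicit $\A \subseteq 2^{[8]}$ with $|\A| = 11$ in which every element of $[8]$ lies in exactly $5 < 11/2$ sets. So no completion of your outline can succeed. To your credit, you sensed this: your closing paragraph correctly identifies that the interval-disjointness condition controls only the total ``lift'' $\sum_A 2^{|F_A \setminus A|} \le 2^n$ and says nothing about how $F_A \setminus A$ distributes over the ground set, and that an adversarial bijection could concentrate the deficit exactly where the filter inequality has no slack. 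That is precisely the mechanism the counterexample exploits: the filter there is all sets of size at least $7$ together with $[8]\setminus\{1,2\}$ and $[8]\setminus\{3,4\}$, and the two sets mapped to the codimension-two filter elements are singletons ($\{8\}$ and $\{1\}$), so the deficits are large and spread so evenly that every element ends up just below the majority threshold. Note also that your proposed ``privileged element'' fails concretely: the elements $5,6,7,8$ lie in every member of this filter, so $q_x = 1$ for them, yet each still has $p_x = 5/11 < 1/2$.

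Two smaller remarks. First, your observation that every element of $[n]$ lies in at least half the sets of a filter is only true for elements appearing in some set of the filter in the intended way --- the injection $F \mapsto F \cup \{x\}$ does establish $q_x \ge 1/2$, and that part is fine. Second, the partial results you derive are genuinely the content of Reimer's theorem (average set size at least $\frac{1}{2}\log_2|\A|$), and your instinct that this average bound is far too weak to force a pointwise majority is exactly right; the paper's contribution is to show the weakness is not an artifact of the proof but intrinsic to Condition 1. The correct conclusion from your analysis is not ``a new structural observation is needed'' but ``the conjecture is false,'' and a minimal counterexample (which the paper shows must have $n \ge 8$) confirms it.
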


As Reimer showed that all union-closed families satisfy Condition 1, this conjecture is clearly a strengthening of the union-closed sets conjecture. The purpose of this note is to show that Conjecture \ref{conj} is false.


\section{Counterexample}
In what follows we will always have $\A$ and $ \F$ as in Definition \ref{def}. 

\begin{note} \label{equiv}
An equivalent way of stating the second part of Condition 1 is that at least one of $A \setminus F_B$ or $B \setminus F_A$ is non-empty.
\end{note}

We will use the following notation:
\begin{itemize}
    \item $\A_x = \{A \in \A : x \in A\}$
    \item $A_0$ is the set for which $F_{A_0}=[n]$
    \item $A_i$ is the set for which $F_{A_i} = [n]\setminus \{i\}$ for $i \in [n]$
    \item $B_{i,j}$ is the set for which $F_{B_{i,j}}= [n]\setminus \{i,j\}$ for $i \neq j \in [n]$
\end{itemize}

Before giving the counterexample we will briefly describe how we found it and indicate why no smaller example is possible. The following observation was our starting point. 
\begin{fact} \label{n-1}
Assume $\A$ satisfies Condition 1. If every set in $\F$ has size at least $n-1$ then there is an element in at least half of the sets of $\A$.
\end{fact}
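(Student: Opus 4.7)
The plan is to explicitly classify $\F$ in this restricted setting and then read off what Condition 1 says about $\A$. Since $\F$ is a filter and the only set of size $n$ in $2^{[n]}$ is $[n]$ itself, we have $[n]\in\F$ whenever $\F\neq\emptyset$, and therefore $\F=\{[n]\}\cup\{[n]\setminus\{i\}:i\in S\}$ for some $S\subseteq [n]$. Using the notation from the excerpt, $\A=\{A_0\}\cup\{A_i:i\in S\}$, so $|\A|=|S|+1$.

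Next I would extract the consequences of Condition 1 via Note \ref{equiv}. Applied to the pair $A_0, A_i$ (with $i\in S$), one has $A_i\setminus F_{A_0}=A_i\setminus [n]=\emptyset$, so we must have $A_0\setminus F_{A_i}=A_0\cap\{i\}\neq\emptyset$, i.e.\ $i\in A_0$; hence $S\subseteq A_0$. Applied to a pair $A_i,A_j$ with $i\neq j$ in $S$, the same note yields $A_i\cap\{j\}\neq\emptyset$ or $A_j\cap\{i\}\neq\emptyset$, i.e.\ $j\in A_i$ or $i\in A_j$. Combined with $i\notin A_i$ (which follows from $A_i\subseteq F_{A_i}=[n]\setminus\{i\}$), this completely describes the incidences of elements of $S$ with the sets $A_i$.

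Now I would do a double counting on pairs $(i,C)$ with $i\in S$ and $C\in\A$ containing $i$. Each $i\in S$ lies in $A_0$ and not in $A_i$, and for each $j\in S\setminus\{i\}$ it lies in $A_j$ or not. Writing $d(i)=|\{j\in S\setminus\{i\}:i\in A_j\}|$, the frequency of $i$ in $\A$ is at least $1+d(i)$. Since the condition $j\in A_i$ or $i\in A_j$ forces at least one ordered pair from each unordered pair $\{i,j\}\subseteq S$ to contribute, $\sum_{i\in S}d(i)\ge\binom{|S|}{2}$, and so
\begin{equation*}
\sum_{i\in S}|\A_i|\;\ge\;|S|+\binom{|S|}{2}\;=\;|S|\cdot\frac{|S|+1}{2}.
\end{equation*}
By averaging, some $i\in S$ satisfies $|\A_i|\ge(|S|+1)/2=|\A|/2$, as desired. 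The degenerate cases $\F=\emptyset$ and $S=\emptyset$ (so $\A=\{A_0\}$) need a brief sentence but are immediate. There is no real obstacle here: once one sees that Condition 1 reduces to a tournament-like orientation on $S$, the averaging is forced.
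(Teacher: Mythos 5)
Your proposal is correct and is essentially the paper's own argument: the paper encodes the relation ``$i\in A_j$'' as a digraph on $S$ containing a tournament and invokes the fact that a tournament on $k$ vertices has a vertex of out-degree at least $\frac{k-1}{2}$, which is exactly your double count $\sum_{i\in S}d(i)\ge\binom{|S|}{2}$ followed by averaging. The only difference is presentational (explicit averaging versus the tournament lemma), so there is nothing further to add.
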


\begin{proof}
Without loss of generality assume $\F= \{[n]\} \cup \{[n]\setminus \{i\}: i \in [k]\}$. Hence, $|\F|=|\A| = k+1$. By Note \ref{equiv} we know that $[k] \subseteq A_0$. Now we will view each $A_i$ as a vertex labelled $i$ in a digraph, $D$, on vertex set $[k]$, with $(i,j)$ an edge exactly when $i \in A_j$. Again by Note \ref{equiv} we know that $D$ must contain a tournament. Furthermore, the number of sets containing $i$ is simply the out-degree of $i$ plus 1 (since $i \in A_0$). Since $D$ has $k$ vertices and contains a tournament it has maximum out-degree at least $\frac{k-1}{2}$. Hence there is always an element in at least $\frac{k+1}{2}$ members of $\A$.
\end{proof}

We first observe that if $n$ is the smallest integer such that there is a counterexample to Conjecture \ref{conj} on $[n]$ and $\A$ is such a counterexample, then $\F$ must contain all sets of size $n-1$. To see this suppose instead that the elements of $\F$ of size $n-1$ are $[n]\setminus \{i\}$ for $i \in [k]$ with $k<n$. Since $\F$ is a filter we have $\{k+1, \ldots, n\} \subseteq F$ for all $F \in \F$, implying that the condition in Note \ref{equiv} is not affected if we replace each $X \in \A \cup \F$ by $X\setminus \{k+1, \ldots,n\}$. This produces a counterexample on a smaller set, contradicting the minimality of $n$. 


Restrict $\A$ to $\A'\coloneqq\{A_i\}_{i=0}^n$. If $n$ is even then there exists $x\in [n]$ with $|\A'_x| \ge \frac{n+2}{2}$.  Hence we need at least two sets in $\A \setminus \A'$. (If $n$ is odd similar reasoning shows that there must be at least three sets in $\A \setminus \A'$.) 

In our example we will take $n$ to be even and $\F$ to consist of $[n] \setminus \{1,2\}$ and $ [n] \setminus\{3,4\}$ along with all sets of size at least $n-1$. Thus $|\F|=|\A|=n+3$, $A_0= [n]$, and we want to arrange that $|\A_x| \le \frac{n}{2}+1$ for all $x \in [n]$. We will use the same digraph, $D$, as in the proof of Fact \ref{n-1} (with $(i,j)$ an edge if and only if $i \in A_j$). Note that the $B_{i,j}$'s do not affect the digraph. By Note \ref{equiv} we know that if $B_{i,j} \in \A$ then $i \in A_j$ and $j \in A_i$. Therefore, the sum of the out-degrees in $D$ must be at least $\frac{n^2-n}{2}+2$. Without loss of generality $1 \in B_{3,4}$, since $B_{1,2}$ and $B_{3,4}$ must satisfy the condition of Note \ref{equiv}. Additionally, if $B_{1,2}= \emptyset$ then to satisfy Note \ref{equiv} all other sets in $\A$ must contain 1 or 2.  However, $A_0$ contains both 1 and 2, so one of 1 or 2 must appear in at least half the sets, contradicting that $\A$ is a counterexample. Hence, $B_{1,2}$ and $B_{3,4}$ are both non-empty, so we must have at least $2$ vertices of out-degree no more than $\frac{n}{2}-1$ and the rest of out-degree no more than $\frac{n}{2}$. (If $|\A \setminus \A'| >2$ then we get even more ``extra" degrees and the following lower bound on $n$  increases.) Thus we have the inequality $2(\frac{n}{2}-1)+(n-2)(\frac{n}{2}) \ge \frac{n^2-n}{2}+2$, i.e. $n \ge 8$. When $n$ is odd similar consideration gives $n \ge 13$; so, since our example does indeed use $n=8$ it is of the smallest possible size. 

\begin{counter}
Here we will take our universe to be $[8]$. Our family $\A$ consists of the following 11 sets:
\begin{itemize}
    \item $A_0 = [8]$
    \item $A_1= \{2, 4, 6, 7, 8\}$
    \item $A_2= \{1,3,5,8\}$
    \item $A_3= \{1,4,7,8\}$
    \item $A_4 = \{2,3,5,6\}$
    \item $A_5 = \{1, 3, 7\}$
    \item $A_6 = \{2, 3,5\}$
    \item $A_7 = \{2, 4, 6\}$
    \item $A_8 = \{4, 5, 6, 7\}$
    \item $B_{1,2} = \{8\}$
    \item $B_{3,4} = \{1\}$
    
\end{itemize}
\end{counter}

We (or our computers) can easily check that the requirement in Note \ref{equiv} is satisfied and that each element appears in exactly 5 sets.
\\
\\
{\bf Acknowledgment:} I would like to thank Jeff Kahn for suggesting this problem.

\bibliographystyle{amsplain}
\bibliography{Counterexampledesk}

\end{document}